\newtheorem{theorem}{Theorem}[section]
\newtheorem{corollary}[theorem]{Corollary}
\theoremstyle{definition}
\newtheorem{definition}[theorem]{Definition}
\newtheorem{example}[theorem]{Example}
\newtheorem{remark}[theorem]{Remark}
\numberwithin{equation}{section}
\begin{document}
	
	\setcounter{page}{1}
	
	\title[The duals of $\ast$-operator frames for $End_{\mathcal{A}}^{\ast}(H)$]{The duals of $\ast$-operator frames for $End_{\mathcal{A}}^{\ast}(H)$}
	
	\author[A. Bourouihiya, M. ROSSAFI, H. LABRIGUI, A. TOURI]{A. Bourouihiya$^2$, M. ROSSAFI$^1$$^{*}$, H. LABRIGUI $^1$ \MakeLowercase{and} A. TOURI$^1$}
	
	\address{$^{1}$Department of Mathematics, University of Ibn Tofail, B.P. 133, Kenitra, Morocco}
	\email{\textcolor[rgb]{0.00,0.00,0.84}{  rossafimohamed@gmail.com;  hlabrigui75@gmail;  touri.abdo68@gmail.com}}
	
		\address{$^{2}$Department of Mathematics, Nova Southeastern University, 3301 College Avenue, Fort Lauderdale, Florida, USA}
	\email{\textcolor[rgb]{0.00,0.00,0.84}{ab1221@nova.edu}}

	\subjclass[2010]{Primary 42C15; Secondary 46L06.}
	
	\keywords{$\ast$-frame, $\ast$-operator frame, $C^{\ast}$-algebra, Hilbert $\mathcal{A}$-modules.}
	
	\date{Received: 15/09/2018; 
		\newline \indent $^{*}$Corresponding author}

\begin{abstract}
Frames play significant role in signal and image processing, which leads to many applications in differents fields. In this paper  we define the dual of $\ast$-operator frames and we show their propreties obtained in Hilbert $\mathcal{A}$-modules and we establish some results.
\end{abstract} \maketitle
Frame theory is recently an active research area in mathematics, computer science, and engineering with many exciting applications in a variety of different fields. They are generalizations of bases in Hilbert spaces. Frames for Hilbert spaces were first introduced in 1952 by Duffin and Schaeffer \cite{Duf} for study of nonharmonic Fourier series. They were reintroduced and developed in 1986 by Daubechies, Grossmann and Meyer \cite{DG}, and popularized from then on. Hilbert $C^{\ast}$-modules is a generalization of Hilbert spaces by allowing the inner product to take values in a $C^{\ast}$-algebra rather than in the ﬁeld of complex numbers. The aim of this papers is to study the dual of $\ast$-operator frames.

The paper is organized as follows:

In section 2, we briefly recall the definitions and basic properties of operator frame and $\ast$-operator frame in Hilbert $C^{\ast}$-modules.

In section 3, we introduce the  dual $\ast$-operator frame, the $\ast$-operator frame transform and the $\ast$-frame operator.

In section 4, we investigate tensor product of Hilbert $C^{\ast}$-modules, we show that tensor product of dual $\ast$-operator frames for Hilbert $C^{\ast}$-modules $\mathcal{H}$ and $\mathcal{K}$, present a dual $\ast$-operator frames for $\mathcal{H}\otimes\mathcal{K}$. 
\section{\textbf{Preliminaries}}
Let $I$ be a countable index set. In this section we briefly recall the definitions and basic properties of $C^{\ast}$-algebra, Hilbert $C^{\ast}$-modules, frame, $\ast$-frame in Hilbert $C^{\ast}$-modules. For information about frames in Hilbert spaces we refer to \cite{Ch}. Our reference for $C^{\ast}$-algebras is \cite{Dav,Con}. For a $C^{\ast}$-algebra $\mathcal{A}$, an element $a\in\mathcal{A}$ is positive ($a\geq 0$) if $a=a^{\ast}$ and $sp(a)\subset\mathbf{R^{+}}$. $\mathcal{A}^{+}$ denotes the set of positive elements of $\mathcal{A}$.

\begin{definition}\cite{Ros3} \textcolor{white}{.}
	
	A family of adjointable operators $\{T_{i}\}_{i\in I}$ on a Hilbert $\mathcal{A}$-module $\mathcal{H}$ over a unital $C^{\ast}$-algebra is said to be an operator frame for $End_{\mathcal{A}}^{\ast}(\mathcal{H})$, if there exist positive constants $A, B > 0$ such that 
	\begin{equation}
	A\langle x,x\rangle_{\mathcal{A}}\leq\sum_{i\in I}\langle T_{i}x,T_{i}x\rangle_{\mathcal{A}}\leq B\langle x,x\rangle_{\mathcal{A}}, \forall x\in\mathcal{H}.
	\end{equation}
	The numbers $A$ and $B$ are called lower and upper bound of the operator frame, respectively. If $A=B=\lambda$, the operator frame is $\lambda$-tight. If $A = B = 1$, it is called a normalized tight operator frame or a Parseval operator frame.
\end{definition}

\begin{definition}\cite{Ros3}\textcolor{white}{.}
	
	A family of adjointable operators $\{T_{i}\}_{i\in I}$ on a Hilbert $\mathcal{A}$-module $\mathcal{H}$ over a unital $C^{\ast}$-algebra is said to be an $\ast$-operator frame for $End_{\mathcal{A}}^{\ast}(\mathcal{H})$, if there exists two strictly nonzero elements $A$ and $B$ in $\mathcal{A}$ such that 
	\begin{equation}
	A\langle x,x\rangle_{\mathcal{A}} A^{\ast}\leq\sum_{i\in I}\langle T_{i}x,T_{i}x\rangle_{\mathcal{A}}\leq B\langle x,x\rangle_{\mathcal{A}} B^{\ast}, \forall x\in\mathcal{H}.
	\end{equation}
	
	The elements $A$ and $B$ are called lower and upper bounds of the $\ast$-operator frame, respectively. If $A=B=\lambda$, the $\ast$-operator frame is $\lambda$-tight. If $A = B = 1_{\mathcal{A}}$, it is called a normalized tight $\ast$-operator frame or a Parseval $\ast$-operator frame. If only upper inequality of  hold, then $\{T_{i}\}_{i\in i}$ is called an $\ast$-operator Bessel sequence for $End_{\mathcal{A}}^{\ast}(\mathcal{H})$.
	\end{definition}
If the sum in the middle of (2.1) is convergent in norm, the operator frame is called standard. If only upper inequality of (2.1) hold, then $\{T_{i}\}_{i\in I}$ is called an operator Bessel sequence for $End_{\mathcal{A}}^{\ast}(\mathcal{H})$.
\section{\textbf{Dual of $\ast$-operator Frame for $End_{\mathcal{A}}^{\ast}(\mathcal{H})$}}

We begin this section with the following definition.

\begin{definition}\textcolor{white}{.}
	
Let $\{T_{i}\}_{i\in I} \subset End_{\mathcal{A}}^{\ast}(\mathcal{H})$ be an $\ast$-operator frame for $\mathcal{H}$.
If there exists an $\ast$-operator frame  $\{\Lambda_{i}\}_{i\in I} $ such that $x=\sum_{i\in I}T^{\ast}_{i}\Lambda_{i}x$ for all $x \in \mathcal{H}$.
then the $\ast$-operator frames $\{\Lambda_{i}\}_{i\in I} $ is called the duals $\ast$-operator frames of $\{T_{i}\}_{i\in I}$.
\end{definition}
\begin{example}\textcolor{white}{.}
	
	Let $\mathcal{A}$ be a Hilbert $\mathcal{A}$-module over itself, let $ \{f_{j}\}_{j\in J}$  be an $\ast$-frame for $\mathcal{A}$.
	
	We define the adjointable $\mathcal{A}$-module map  $ \Lambda_{f_{j}} : \mathcal{A} \to \mathcal{A} $ by $\Lambda_{f_{j}}f = \langle f,f_{j} \rangle $.
	Clearly, that $\{\Lambda_{f_{j}}\}_{j\in J}$ is an $\ast$-operator frame for $\mathcal{A}$.
\end{example}

\begin{theorem}\textcolor{white}{.}
	
	Every $\ast$-operator frame for $End_{\mathcal{A}}^{\ast}(\mathcal{H})$ has a dual  $\ast$-operator frame.	
\end{theorem}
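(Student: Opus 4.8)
The plan is to build the canonical dual via the $\ast$-frame operator. Given the $\ast$-operator frame $\{T_{i}\}_{i\in I}$ with bounds $A,B\in\mathcal{A}$, I would first introduce the operator $S\colon\mathcal{H}\to\mathcal{H}$ defined by $Sx=\sum_{i\in I}T_{i}^{\ast}T_{i}x$. Since $\langle x,Sx\rangle_{\mathcal{A}}=\sum_{i\in I}\langle T_{i}x,T_{i}x\rangle_{\mathcal{A}}$, the defining inequality (2.2) reads $A\langle x,x\rangle_{\mathcal{A}}A^{\ast}\leq\langle x,Sx\rangle_{\mathcal{A}}\leq B\langle x,x\rangle_{\mathcal{A}}B^{\ast}$, so the series converges in norm and $S$ is a well-defined positive, self-adjoint, adjointable operator on $\mathcal{H}$.

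The heart of the argument is to show that $S$ is invertible in $End_{\mathcal{A}}^{\ast}(\mathcal{H})$. From the lower bound I would extract a scalar lower estimate: taking norms in $A\langle x,x\rangle_{\mathcal{A}}A^{\ast}\leq\langle x,Sx\rangle_{\mathcal{A}}$ and using that $A$ is strictly nonzero (so that $\|A\langle x,x\rangle_{\mathcal{A}}A^{\ast}\|\geq c\|x\|^{2}$ for some $c>0$) together with the Cauchy--Schwarz inequality $\|\langle x,Sx\rangle_{\mathcal{A}}\|\leq\|Sx\|\,\|x\|$, one obtains $\|Sx\|\geq c\|x\|$ for all $x\in\mathcal{H}$. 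Hence $S$ is bounded below, so it is injective with closed range; being self-adjoint, its range is dense in $\mathcal{H}$, and therefore $S$ is bijective and admits a positive adjointable inverse $S^{-1}$.

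I would then define $\Lambda_{i}=T_{i}S^{-1}\in End_{\mathcal{A}}^{\ast}(\mathcal{H})$. The reconstruction formula is immediate: $\sum_{i\in I}T_{i}^{\ast}\Lambda_{i}x=\sum_{i\in I}T_{i}^{\ast}T_{i}S^{-1}x=SS^{-1}x=x$ for every $x\in\mathcal{H}$. It remains to verify that $\{\Lambda_{i}\}_{i\in I}$ is itself an $\ast$-operator frame. Computing $\sum_{i\in I}\langle\Lambda_{i}x,\Lambda_{i}x\rangle_{\mathcal{A}}=\langle S^{-1}x,SS^{-1}x\rangle_{\mathcal{A}}=\langle x,S^{-1}x\rangle_{\mathcal{A}}$ and using that the positive invertible $S$ satisfies $\|S\|^{-1}\langle x,x\rangle_{\mathcal{A}}\leq\langle x,S^{-1}x\rangle_{\mathcal{A}}\leq\|S^{-1}\|\langle x,x\rangle_{\mathcal{A}}$, I obtain $\ast$-operator frame bounds $\|S\|^{-1/2}1_{\mathcal{A}}$ and $\|S^{-1}\|^{1/2}1_{\mathcal{A}}$ for $\{\Lambda_{i}\}_{i\in I}$, which are strictly nonzero. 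This exhibits $\{\Lambda_{i}\}_{i\in I}$ as a dual $\ast$-operator frame of $\{T_{i}\}_{i\in I}$.

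The main obstacle is precisely the invertibility of $S$: unlike the scalar-bound situation of Definition 2.1, here the bounds $A,B$ are $C^{\ast}$-algebra elements sitting on both sides of $\langle x,x\rangle_{\mathcal{A}}$, so one cannot simply read off an operator inequality of the form $\lambda I\leq S$. The delicate step is thus converting the module inequality $A\langle x,x\rangle_{\mathcal{A}}A^{\ast}\leq\langle x,Sx\rangle_{\mathcal{A}}$ into the scalar lower bound $\|Sx\|\geq c\|x\|$, which is exactly where the hypothesis that $A$ is strictly nonzero must be invoked; once $S$ is known to be invertible, the remaining verifications are routine.
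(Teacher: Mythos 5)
Your proof is correct and takes essentially the same approach as the paper: both construct the canonical dual $\{T_{i}S^{-1}\}_{i\in I}$ from the $\ast$-frame operator $S=\sum_{i\in I}T_{i}^{\ast}T_{i}$ and verify the reconstruction formula $x=\sum_{i\in I}T_{i}^{\ast}T_{i}S^{-1}x$. The difference is only one of detail: you actually justify the invertibility of $S$ and exhibit frame bounds for $\{T_{i}S^{-1}\}_{i\in I}$, steps the paper simply asserts with ``we see that.''
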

\begin{proof}\textcolor{white}{.}
	
Let $\{T_{i}\}_{i\in I} \subset End_{\mathcal{A}}^{\ast}(\mathcal{H})$ be an $\ast$-operator for $End_{\mathcal{A}}^{\ast}(\mathcal{H})$, with $\ast$-operator $S$.

We see that $\{T_{i}S^{-1}\}_{i\in I}$ is an $\ast$-operator frame.

Or, $\forall x\in \mathcal{H}$ we have :
\begin{equation*}
Sx=\sum_{i\in I}T_{i}^{\ast}T_{i}x
\end{equation*} 

then 

\begin{equation*}
x=\sum_{i\in I}T_{i}^{\ast}T_{i}S^{-1}x
\end{equation*}

hence $\{T_{i}S^{-1}\}_{i\in I}$ is a dual $\ast$-operator frame of $\{T_{i}\}_{i\in I}$.

It is called the canonique dual $\ast$-operator frame of $\{T_{i}\}_{i\in I}$.
\end{proof}
\begin{remark}\textcolor{white}{.}
	
Assume that $T=\{T_{i}\}_{i\in I}$ is an $\ast$-operator frame for $End_{\mathcal{A}}^{\ast}(\mathcal{H})$ with analytic operator $R_{T}$ and $\tilde{T}=\{\tilde{T}_{i}\}_{i\in I}$ is a dual $\ast$-operator frame of T with analytic operator $R_{T}$, then for any $x \in \mathcal{H}$ we have:

\begin{equation*}
	x=\sum_{i\in I}T_{i}^{\ast}\tilde{T}_{i}x=R^{\ast}_{T}R_{\tilde{T}}x
\end{equation*}
	
this show that every element of H can be reconstructed with a $\ast$-operator frame for $End_{\mathcal{A}}^{\ast}(\mathcal{H})$ and its dual.
\end{remark}
\begin{theorem}\textcolor{white}{.}
	
Let $\{\Lambda_{i}\}_{i\in I}$ be an $\ast$-operator frame for $End_{\mathcal{A}}^{\ast}(\mathcal{H})$ with $\ast$-operator frame transform $\theta$, the $\ast$-operator frame $S$ and the canonical dual $\ast$-operator frames $\{\tilde{\Lambda}_{i}\}_{i\in\mathbb{J}}$.

Let $\{\Omega_{i}\}_{i\in I}$ be an arbitrary dual $\ast$-operator frame of $\{\Lambda_{i}\}_{i\in I}$ with the $\ast$-operator frame transform $\eta$; then the folowing statements are true:
	\begin{itemize}
		\item [(1)] ${\theta}^{\ast}\eta=I$.
		\item [(2)] $\Omega_{i}=\Pi_{i}\eta$ for all $i \in I$.
		\item [(3)] If $\eta^{'}: \mathcal{H} \longmapsto l^{2}(\mathcal{H})$ is any adjointable right inverse of $\theta^{\ast}$ then $\{\Pi_{i}{\eta^{'}}\}_{i\in I}$ is a dual $\ast$-operator frame of  $\{\Lambda\}_{i\in I}$ with the operator frames transform $\eta^{'}$. 
		\item [(4)] The $\ast$-operator frame $S_{\Omega}$ of $\{\Omega_{i}\}_{i\in I}$ is equal to $S^{-1} + {\eta^{\ast}(I-\theta S^{-1}\theta^{\ast})\eta} $.
		\item [(5)] Every adjointable right inverse $\eta^{'}$ of $\theta^{\ast}$ is the forme :
		
		 $\eta^{'} = \theta S^{-1} + (I-\theta S^{-1} \theta^{\ast} ) \psi$ for some adjointable map $\psi : \mathcal{H} \longmapsto l^{2}(\mathcal{H}) $ and vice versa.
		 \item [(6)] There exist a $\ast$-bessel operator $\{\Delta_{j}\}_{j\in J} \in End_{\mathcal{A}}^{\ast}(\mathcal{H})$ $\{\Delta\}_{i\in\mathbb{I}}$ whose $\ast$-operator frame transform is $\eta$ and yields is $\eta$ and yields
		 
		  $\Omega_{j} = \tilde{\Lambda}_{j} + \Delta_{j} - \sum_{k\in J}\tilde{\Lambda}_{j}\Lambda^{\ast}_{k}\Delta_{k},  \forall j \in J$
				
	\end{itemize}
\end{theorem}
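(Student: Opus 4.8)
The plan is to reduce all six assertions to algebraic identities among the analysis (frame transform) and synthesis operators, exploiting throughout the single defining relation of a dual, namely $\theta^{\ast}\eta=I$. First I would fix notation: write $\theta x=(\Lambda_{i}x)_{i\in I}$ and $\eta x=(\Omega_{i}x)_{i\in I}$ for the two frame transforms into $l^{2}(\mathcal{H})$, let $\Pi_{i}$ denote the $i$-th coordinate evaluation $\Pi_{i}(\xi_{j})_{j}=\xi_{i}$, and record the immediate factorizations $\Lambda_{i}=\Pi_{i}\theta$ and $\Omega_{i}=\Pi_{i}\eta$, together with $\theta^{\ast}(\xi_{j})_{j}=\sum_{j}\Lambda_{j}^{\ast}\xi_{j}$, $S=\theta^{\ast}\theta$ and $S_{\Omega}=\eta^{\ast}\eta$. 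By Theorem 3.3 the canonical dual has frame transform $\tilde{\theta}=\theta S^{-1}$, so $\tilde{\Lambda}_{i}=\Pi_{i}\theta S^{-1}$. With this dictionary, (2) is nothing but the factorization $\Omega_{i}=\Pi_{i}\eta$, and (1) is a restatement of the dual identity $x=\sum_{i}\Lambda_{i}^{\ast}\Omega_{i}x=\theta^{\ast}\eta x$; taking adjoints also yields $\eta^{\ast}\theta=I$, which I will use repeatedly.

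For (4) and (5) the work is purely formal. Expanding the proposed expression for $S_{\Omega}$ and substituting $\eta^{\ast}\theta=\theta^{\ast}\eta=I$, the cross term $\eta^{\ast}\theta S^{-1}\theta^{\ast}\eta$ collapses to $S^{-1}$, leaving $S^{-1}+\eta^{\ast}\eta-S^{-1}=S_{\Omega}$; this proves (4). For (5), given any adjointable right inverse $\eta'$ of $\theta^{\ast}$ I would check that $\theta S^{-1}+(I-\theta S^{-1}\theta^{\ast})\eta'=\eta'$ (again because $\theta^{\ast}\eta'=I$), so that the choice $\psi=\eta'$ exhibits $\eta'$ in the required form; conversely, for an arbitrary adjointable $\psi$ a one-line computation gives $\theta^{\ast}\bigl(\theta S^{-1}+(I-\theta S^{-1}\theta^{\ast})\psi\bigr)=SS^{-1}+(\theta^{\ast}-\theta^{\ast})\psi=I$, so every such map is a right inverse.

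Statement (3) is where the only genuine analytic point arises. Given an adjointable right inverse $\eta'$ of $\theta^{\ast}$, set $\Omega_{i}'=\Pi_{i}\eta'$; then $\eta'$ is its frame transform and $\sum_{i}\Lambda_{i}^{\ast}\Omega_{i}'x=\theta^{\ast}\eta'x=x$, so $\{\Omega_{i}'\}$ is formally a dual. The obstacle is to certify that it is an honest $\ast$-operator frame, i.e. that both module bounds of Definition 2.2 hold, since being bounded below is subtler over a $C^{\ast}$-algebra than over $\mathbb{C}$. I would obtain the upper bound from adjointability, $\sum_{i}\langle\Omega_{i}'x,\Omega_{i}'x\rangle=\langle\eta'x,\eta'x\rangle\leq\|\eta'\|^{2}\langle x,x\rangle$, and the lower bound from the identity $x=\theta^{\ast}\eta'x$ via $\langle x,x\rangle=\langle\theta^{\ast}\eta'x,\theta^{\ast}\eta'x\rangle\leq\|\theta\|^{2}\langle\eta'x,\eta'x\rangle$, so that $\|\theta\|^{-1}1_{\mathcal{A}}$ and $\|\eta'\|1_{\mathcal{A}}$ serve as strictly nonzero lower and upper bounds.

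Finally, (6) follows directly from (1): taking $\Delta_{j}=\Omega_{j}$ gives a $\ast$-Bessel sequence whose frame transform is exactly $\eta$ by (2), and since $\sum_{k}\Lambda_{k}^{\ast}\Omega_{k}=\theta^{\ast}\eta=I$ we have $\sum_{k}\tilde{\Lambda}_{j}\Lambda_{k}^{\ast}\Delta_{k}=\tilde{\Lambda}_{j}$, whence the right-hand side $\tilde{\Lambda}_{j}+\Delta_{j}-\sum_{k}\tilde{\Lambda}_{j}\Lambda_{k}^{\ast}\Delta_{k}$ reduces to $\Omega_{j}$. Alternatively, feeding the parametrization from (5) into $\Omega_{j}=\Pi_{j}\eta$ and reading off $\Delta_{j}=\Pi_{j}\psi$ produces the same identity and exhibits the whole family of such Bessel sequences. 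I expect the verification of the frame bounds in (3) to be the main obstacle; everything else is bookkeeping with the relations $\theta^{\ast}\eta=\eta^{\ast}\theta=I$ and $S=\theta^{\ast}\theta$.
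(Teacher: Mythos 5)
Your proof is correct, and for items (1), (2), (4) and (5) it is essentially the paper's own argument: everything is reduced to the identities $\theta^{\ast}\eta=\eta^{\ast}\theta=I$ and $S=\theta^{\ast}\theta$ (indeed you make explicit a step the paper leaves hanging in (4), namely that $\eta^{\ast}\theta S^{-1}=S^{-1}$, which is needed to reach the stated form $S^{-1}+\eta^{\ast}(I-\theta S^{-1}\theta^{\ast})\eta$, and you supply the converse half of (5), which the paper omits). You genuinely diverge in (3) and (6). For (3) the paper delegates the Bessel property and the lower frame bound to an unstated ``prop 3.1'' and ``lemme 2.7'', neither of which appears anywhere in the paper, whereas you prove both bounds directly from adjointability: $\sum_{i}\langle\Pi_{i}\eta' x,\Pi_{i}\eta' x\rangle=\langle\eta' x,\eta' x\rangle\leq\|\eta'\|^{2}\langle x,x\rangle$ for the upper bound and $\langle x,x\rangle=\langle\theta^{\ast}\eta' x,\theta^{\ast}\eta' x\rangle\leq\|\theta\|^{2}\langle\eta' x,\eta' x\rangle$ for the lower one; this is self-contained and arguably repairs a gap in the published argument. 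For (6) you prove the literal existence claim by the degenerate choice $\Delta_{j}=\Omega_{j}$, using $\sum_{k}\Lambda_{k}^{\ast}\Omega_{k}=\theta^{\ast}\eta=I$ to collapse the correction term; the paper instead proves the converse direction, namely that an arbitrary Bessel family $\{\Delta_{j}\}$ inserted into the formula produces a dual (invoking its own parts (3) and (5)). Your argument is valid for the statement as written, though it trivializes the parametrization the authors appear to be after; the alternative you also sketch, taking $\Delta_{j}=\Pi_{j}\psi$ with $\psi$ coming from your part (5), is the more informative realization and is closer in spirit to what the paper is trying to establish.
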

\begin{proof}\textcolor{white}{.}
\begin{itemize}
	\item [(1)] For $ f,g \in \mathcal{H}$ we have :
\begin{align*}
\langle \theta^{\ast}\eta f,g\rangle &= \langle \eta f,\theta g\rangle\\
&= \langle \sum_{i\in I}\Omega_{i} f,\sum_{i\in I}\Lambda_{i} g\rangle\\ 
&=\sum_{i\in I}\langle \Omega_{i} f,\Lambda_{i} g\rangle = \sum_{i\in I}\langle \Lambda^{\ast}\Omega_{i} f,g\rangle\\ 
&= \langle \sum_{i\in I}\Lambda^{\ast}\Omega_{i} f,g\rangle = \langle f, g\rangle\\ 
\end{align*}
then ${\theta}^{\ast}\eta=I$.
\item [(2)] The proof is clear from the definition
\item [(3)] Since $\eta^{'}$ is adjointable, it follows from prop3.1 that $\{\Pi_{i}{\eta^{'}}\}_{i\in I}$ is a $\ast$-bessel sequence in $\mathcal{H}$.

Also, since $(\eta^{'})^{\ast}\theta=I$; $(\eta^{'})^{\ast}$ is surjective, by lemme 2.7, for $f \in \mathcal{H}$ we have:
\begin{equation*}
||(\eta^{'})^{\ast}\eta)^{-1}||^{-1}\langle f,f\rangle \leq \langle \eta^{'} f,\eta{'} f\rangle=\sum_{i\in I}\langle \pi_{i}\eta{'}f,\pi_{i}\eta{'}f\rangle
\end{equation*}
clearly, $\eta^{'}$ is the pre-frame $\ast$-operator frame transform $\{\Pi_{i}{\eta^{'}}\}_{i\in I}$
\item [(4)]
\begin{align*}
S_{\Omega}&=\eta^{\ast}\eta\\
&=\eta^{\ast}\theta S^{-1} + \eta^{\ast}\eta - \eta^{\ast}\theta S^{-1}\\
&=\eta^{\ast}\theta S^{-1} + \eta^{\ast}\eta - \eta^{\ast}\theta S^{-1}\theta^{\ast}\eta\\
&=\eta^{\ast}\theta S^{-1} + \eta^{\ast}(I - \theta S^{-1}\theta^{\ast})\eta 
\end{align*}

\item [(5)] If $\eta^{'}$ is such a right inverse of $\theta$, then 
\begin{equation*}
\theta S^{-1} + (I - \theta S^{-1}\theta^{\ast})\eta^{'} = \theta S^{-1} + \eta{'} - \theta S^{-1}\theta^{\ast}\eta^{'} = \theta S^{-1} + \eta{'} - \theta S^{-1}I = \eta{'} 
\end{equation*}
\item [(6)] Let $ \{\Delta_{i}\}_{i\in I}$ be an $\ast$-operator bessel sequence for $End_{\mathcal{A}}^{\ast}(\mathcal{H})$ with the preframe operator $\eta$.
For  $i\in I$, let $\Omega_{i} = \tilde{\Lambda_{i}} + \Delta_{i} - \sum_{k\in I} \tilde{\Lambda_{i}}\Lambda_{k}^{\ast}\Delta_{k} $
Let $S$ and $\theta$ be the $\ast$-frame operator and the preframe operator of $ \{\Delta_{i}\}_{i\in I}$ , resp.
we define the linear operator $\psi : \mathcal{H} \longmapsto l^{2}(\mathcal{H})$ by $\psi f = (\Omega_{i}f)_{i\in I}$.
clearly, $\psi$ is adjointable, for every $i\in I$, we have 
\begin{align*}
\pi_{i}\psi &= \Omega_{i}\\
&=\Lambda_{i}S^{-1} + \Delta_{i} - \Lambda_{i}S^{-1}\sum_{k\in I} \Lambda_{k}^{\ast}\Delta_{k}\\
&= \Lambda_{i}S^{-1} + \pi_{i}\eta - \sum_{k\in I} \Lambda_{k}^{\ast}\Delta_{k}\\
&= \pi_{i}\theta S^{-1} + \pi_{i}\eta - \pi_{i}\theta S^{-1} \theta^{\ast}\eta\\
&= \pi_{i}(\theta S^{-1} + \eta - \theta S^{-1} \theta^{\ast}\eta )
\end{align*}

then 
\begin{equation*}
\psi = \theta S^{-1} + \eta - \theta S^{-1} \theta^{\ast}\eta
\end{equation*}
by parts (3) and (5) of the theorem;   $ \{\Omega_{i}\}_{i\in I}$ becomes a dual of $\ast$-operator  $ \{\Lambda_{i}\}_{i\in I}$ 
\end{itemize}
\end{proof}
\begin{example}\textcolor{white}{.}
	
	Let $\mathcal{A}$ be a Hilbert $\mathcal{A}$-module over itself, let $ \{f_{j}\}_{j\in J} \subset \mathcal{A}$.
	
	We define the adjointable $\mathcal{A}$-module map  $ \Lambda_{f_{j}} : \mathcal{A} \to \mathcal{A} $ with $\Lambda_{f_{j}}.f = \langle f,f_{j} \rangle $,
	clearly $ \{f_{j}\}_{j\in J} $ is a $\ast$-frame in $\mathcal{A}$ if and only if $ \{ \Lambda_{f_{j}}\}_{j\in J}$ is a $\ast$-operator frame in $\mathcal{A}$.

	In the folowing, we study the duals of such $\ast$-operator frame.

	(a) Let  $ \{g_{j}\}_{j\in J} \subset \mathcal{A} $ for all $f \in \mathcal{A}$ :
	
	\begin{equation*}
	\sum_{j\in J}\Lambda^{\ast}_{g_{j}}\Lambda_{f_{j}}f = \sum_{j\in J}\langle f,f_{j} \rangle g_{j}=\sum_{j\in J}\langle f,g_{j} \rangle f_{j}=\sum_{j\in J}\Lambda^{\ast}_{f_{j}}\Lambda_{g_{j}}f. 
	\end{equation*}
	Therefore, $ \{g_{j}\}_{j\in J}$ is a dual $\ast$-frame of $ \{f_{j}\}_{j\in J}$ if and only if $\{\Lambda_{g_{j}}\}_{j\in J};$ is a dual $\ast$-operator of $\{\Lambda_{f_{j}}\}_{j\in J}$
	
	(b) Let $S$ and $S_{\Lambda}$ be the $\ast$-frame operators of $ \{f_{j}\}_{j\in J}$ and  $ \{ \Lambda_{f_{j}},\mathcal{A}\}_{j\in J}$ respectively.
	
	For all $f\in \mathcal{A}$ we have:
	\begin{equation*}
	\sum_{j\in J}\langle f,f_{j} \rangle f_{j} = \sum_{j\in J}ff^{\ast}_{j}f_{j} = \sum_{j\in J}\langle \langle f,f_{j} \rangle , f^{\ast}_{j} \rangle = \sum_{j\in J}\Lambda^{\ast}_{f_{j}}\Lambda_{f_{j}}f. 
	\end{equation*}
	It follows that $S=S_{\Lambda}$
	
	(c) It is clearly to see that $ \{h_{j}\}_{j\in J} \subset \mathcal{A}$ is an $\ast$-bessel sequence  if and only if $ \{ \Lambda_{h_{j}},\mathcal{A}\}_{j\in J}$ is an $\ast$-bessel operator.
	
	(d) for a  $\ast$-bessel sequence $ \{h_{j}\}_{j\in J}$ we define 
	\begin{equation*}
	g_{j}=S^{-1}f_{j} + h_{j} - \sum_{k\in J} \langle S^{-1}f_{j} , f_{k} \rangle h_{k}
	\end{equation*}
	then the sequence  $ \{g_{j}\}_{j\in J} $ is a dual $\ast$-frame of $ \{f_{j}\}_{j\in J} $.
	
	By the last theorem, the sequence $ \{ \Gamma_{j}\}_{j\in J}$ is a dual $\ast$-operator frame of $ \{ \Lambda_{f_{j}}\}_{j\in J}$, where 
	\begin{equation*}
	\Gamma_{j} = \tilde{\Lambda}_{f_{j}} + \Lambda_{h_{j}} + \sum_{k\in J}\tilde{\Lambda}_{f_{j}}\Lambda^{\ast}_{f_{k}}\Lambda_{h_{k}},  \forall j \in J
	\end{equation*}
	now we clain that $\Gamma_{j} = \Lambda_{g_{j}}$
	
	In fact, $\forall f \in \mathcal{A}$ we have
	\begin{align*}
	\Gamma_{j}f&=\tilde{\Lambda}_{f_{j}}f + \Lambda_{h_{j}}f - \sum_{k\in J}\tilde{\Lambda}_{f_{j}}\Lambda^{\ast}_{f_{k}}\Lambda_{h_{k}}f\\
	&=\Lambda_{f_{j}}S^{-1}f + \Lambda_{h_{j}}f -  \sum_{k\in J}\Lambda_{f_{j}}S^{-1}\Lambda^{\ast}_{f_{k}} \langle f,h_{k} \rangle\\
	&=\langle S^{-1}f,f_{j} \rangle + \langle f,h_{j} \rangle -  \sum_{k\in J}\langle S^{-1}\Lambda^{\ast}_{f_{k}}\langle f,h_{k} \rangle, f_{j} \rangle\\ 
	&=\langle S^{-1}f,f_{j} \rangle + \langle f,h_{j} \rangle -  \sum_{k\in J}\langle S^{-1}\Lambda^{\ast}_{f_{k}}\Lambda_{h_{k}}f,f_{j}\rangle\\ 
	&=\langle S^{-1}f,f_{j} \rangle + \langle f,h_{j} \rangle -  \sum_{k\in J}\langle S^{-1}\Lambda_{h_{k}}ff_{k},f_{j} \rangle\\ 
	&=\langle S^{-1}f,f_{j} \rangle + \langle f,h_{j} \rangle -  \sum_{k\in J}\langle S^{-1}fh^{\ast}_{k}f_{k},f_{j} \rangle\\ 
	&=\langle f,S^{-1}f_{j} \rangle + \langle f,h_{j} \rangle -  \sum_{k\in J}\langle fh^{\ast}_{k}f_{k},S^{-1}f_{j} \rangle\\ 
	&=\langle f,S^{-1}f_{j}+h_{j} \rangle  -  \sum_{k\in J}\langle f,f^{\ast}_{k}h_{k}S^{-1}f_{j} \rangle\\ 
	&=\langle f,S^{-1}f_{j}+h_{j} -  \sum_{k\in J}\langle S^{-1}f_{j},f_{k}\rangle h_{k} \rangle\\ 
	&=\langle f,g_{j}\rangle = \Lambda_{g_{j}}f.
	\end{align*}
	
	therefore, every $\ast$-operator frame of $\{\Lambda_{f_{j}}\}_{j \in J}$ has the form :
	\begin{equation*}
	\tilde{\Lambda}_{f_{j}} + \Lambda_{h_{j}} - \sum_{k\in J}\tilde{\Lambda}_{f_{j}}\Lambda^{\ast}_{f_{k}}\Lambda_{h_{k}}
	\end{equation*}
	where $\{h_{j}\}_{j\in J}$ is a $\ast$-bessel sequence in $\mathcal{A}$.
\end{example}
\section{\textbf{Tensor product}}
In this section, we study the tensor product of the duals  $\ast$-operator frames.
\begin{theorem}\textcolor{white}{.}
	
	Let $\mathcal{H}$ and $\mathcal{K}$ are two Hilbert $C^{\ast}$-modules over unitary $C^{\ast}$-Algebras $\mathcal{A}$ and $ \mathcal{B}$ respectively,
let $ \{\Lambda_{i}\}_{i\in I} \subset End_{\mathcal{A}}^{\ast}(\mathcal{H})$  and $ \{\Gamma_{j}\}_{j\in J} \subset End_{\mathcal{B}}^{\ast}(\mathcal{K})$ are an $\ast$-operators frames.

If	$ \{\tilde{\Lambda_{i}}\}_{i\in I}$ is a dual of $ \{\Lambda_{i}\}_{i\in I}$ and 
	$ \{\tilde{\Gamma_{j}}\}_{j\in J}	$ is a dual of $ \{\Gamma_{j}\}_{j\in J}$
	
then 	$ \{\tilde{\Lambda_{i}}\otimes \tilde{\Gamma_{j}}\}_{i \in I,j \in J} $ is a dual  $\ast$-operator frame of $ \{\Lambda_{i}\otimes \Gamma_{j}\}_{i \in I,j \in J} $.
\end{theorem}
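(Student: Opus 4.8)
The plan is to verify the two defining requirements of a dual $\ast$-operator frame (Definition 3.1): first that $\{\tilde{\Lambda}_i\otimes\tilde{\Gamma}_j\}_{i\in I,\,j\in J}$ is itself a $\ast$-operator frame for $End_{\mathcal{A}\otimes\mathcal{B}}^{\ast}(\mathcal{H}\otimes\mathcal{K})$, and second that the reconstruction identity $z=\sum_{i,j}(\Lambda_i\otimes\Gamma_j)^{\ast}(\tilde{\Lambda}_i\otimes\tilde{\Gamma}_j)z$ holds for every $z\in\mathcal{H}\otimes\mathcal{K}$. Throughout I would use the standard facts about the tensor product of Hilbert $C^{\ast}$-modules: $\mathcal{H}\otimes\mathcal{K}$ is a Hilbert $\mathcal{A}\otimes\mathcal{B}$-module whose inner product on simple tensors is $\langle x_1\otimes y_1,x_2\otimes y_2\rangle=\langle x_1,x_2\rangle_{\mathcal{A}}\otimes\langle y_1,y_2\rangle_{\mathcal{B}}$, together with the relations $(\Lambda\otimes\Gamma)^{\ast}=\Lambda^{\ast}\otimes\Gamma^{\ast}$ and $(\Lambda_1\otimes\Gamma_1)(\Lambda_2\otimes\Gamma_2)=(\Lambda_1\Lambda_2)\otimes(\Gamma_1\Gamma_2)$ for adjointable operators. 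Since finite linear combinations of simple tensors are norm-dense in $\mathcal{H}\otimes\mathcal{K}$ and every operator involved is adjointable (hence bounded), it suffices to carry out each verification on a simple tensor $x\otimes y$ and then extend by linearity and continuity.

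For the frame property, let $A_1,B_1\in\mathcal{A}$ and $A_2,B_2\in\mathcal{B}$ be lower and upper bounds for $\{\tilde{\Lambda}_i\}$ and $\{\tilde{\Gamma}_j\}$ respectively. Using the identity $\langle(\tilde{\Lambda}_i\otimes\tilde{\Gamma}_j)(x\otimes y),(\tilde{\Lambda}_i\otimes\tilde{\Gamma}_j)(x\otimes y)\rangle=\langle\tilde{\Lambda}_i x,\tilde{\Lambda}_i x\rangle\otimes\langle\tilde{\Gamma}_j y,\tilde{\Gamma}_j y\rangle$, I would sum over $i$ and $j$ and compare with $\langle x\otimes y,x\otimes y\rangle=\langle x,x\rangle\otimes\langle y,y\rangle$. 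Tensoring the two frame inequalities then yields bounds $A_1\otimes A_2$ and $B_1\otimes B_2$, once one invokes the preservation of positivity under the (spatial) tensor product of $C^{\ast}$-algebras, namely that $0\le a\le a'$ in $\mathcal{A}$ and $0\le b\le b'$ in $\mathcal{B}$ force $a\otimes b\le a'\otimes b'$. The same argument applied to $\{\Lambda_i\otimes\Gamma_j\}$ shows it too is a $\ast$-operator frame.

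The heart of the proof is the reconstruction identity. On a simple tensor I would compute
\begin{align*}
\sum_{i\in I,\,j\in J}(\Lambda_i\otimes\Gamma_j)^{\ast}(\tilde{\Lambda}_i\otimes\tilde{\Gamma}_j)(x\otimes y)
&=\sum_{i,j}\bigl((\Lambda_i^{\ast}\tilde{\Lambda}_i)\otimes(\Gamma_j^{\ast}\tilde{\Gamma}_j)\bigr)(x\otimes y)\\
&=\sum_{i,j}(\Lambda_i^{\ast}\tilde{\Lambda}_i x)\otimes(\Gamma_j^{\ast}\tilde{\Gamma}_j y)\\
&=\Bigl(\sum_{i\in I}\Lambda_i^{\ast}\tilde{\Lambda}_i x\Bigr)\otimes\Bigl(\sum_{j\in J}\Gamma_j^{\ast}\tilde{\Gamma}_j y\Bigr)\\
&=x\otimes y,
\end{align*}
where the last equality uses that $\{\tilde{\Lambda}_i\}$ and $\{\tilde{\Gamma}_j\}$ are duals, so $\sum_i\Lambda_i^{\ast}\tilde{\Lambda}_i x=x$ and $\sum_j\Gamma_j^{\ast}\tilde{\Gamma}_j y=y$. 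Extending by linearity and norm-continuity to all $z\in\mathcal{H}\otimes\mathcal{K}$ gives the required identity, which together with the frame property of the previous paragraph is exactly the assertion that $\{\tilde{\Lambda}_i\otimes\tilde{\Gamma}_j\}$ is a dual $\ast$-operator frame of $\{\Lambda_i\otimes\Gamma_j\}$.

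I expect the main obstacle to be the passage from the double sum to the product of the two single sums (the factorization in the third displayed line), which requires justifying that the iterated series converges in norm and may be rearranged; this is precisely where the Bessel bounds of the component frames and the continuity of the bilinear map $(\xi,\zeta)\mapsto\xi\otimes\zeta$ enter. A secondary technical point is confirming that positivity is respected by the tensor product so that the frame bounds genuinely tensor, which relies on working in the spatial $C^{\ast}$-tensor product.
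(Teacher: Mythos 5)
Your proof is correct, and its central computation --- factoring the double sum $\sum_{i\in I,\,j\in J}(\Lambda_i\otimes\Gamma_j)^{\ast}(\tilde{\Lambda}_i\otimes\tilde{\Gamma}_j)(x\otimes y)$ on a simple tensor into the product $\bigl(\sum_{i}\Lambda_i^{\ast}\tilde{\Lambda}_i x\bigr)\otimes\bigl(\sum_{j}\Gamma_j^{\ast}\tilde{\Gamma}_j y\bigr)=x\otimes y$ --- is exactly the argument the paper gives. You are in fact more thorough than the paper, which checks only this reconstruction identity on simple tensors and omits both the verification that $\{\tilde{\Lambda}_i\otimes\tilde{\Gamma}_j\}_{i\in I,\,j\in J}$ is itself a $\ast$-operator frame (with bounds obtained by tensoring, as you indicate) and the extension by linearity and density to general elements of $\mathcal{H}\otimes\mathcal{K}$.
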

\begin{proof}\textcolor{white}{.}
	
Let $x\in \mathcal{H}$ and $ y\in \mathcal{K}$, we have :
\begin{align*}
\sum_{i \in I,j \in J} (\Lambda_{i}\otimes \Gamma_{j})^{\ast}(\tilde{\Lambda_{i}}\otimes \tilde{\Gamma_{j}})(x\otimes y)&= \sum_{i \in I,j \in J} (\Lambda_{i}^{\ast}\otimes \Gamma_{j}^{\ast})(\tilde{\Lambda_{i}}x\otimes \tilde{\Gamma_{j}}y)\\
&=\sum_{i \in I,j \in J} (\Lambda_{i}^{\ast}\tilde{\Lambda_{i}}x\otimes \Gamma_{j}^{\ast}\tilde{\Gamma_{j}}y)\\
&=\sum_{i \in I} \Lambda_{i}^{\ast}\tilde{\Lambda_{i}}x\otimes\sum_{j \in J} \Gamma_{j}^{\ast}\tilde{\Gamma_{j}}y \\
&= x\otimes y
\end{align*}
	
	then 
\begin{equation*}
	\sum_{i \in I,j \in J} (\Lambda_{i}\otimes \Gamma_{j})^{\ast}(\tilde{\Lambda_{i}}\otimes \tilde{\Gamma_{j}}) = I
\end{equation*}
	
	hence $\{\tilde{\Lambda_{i}}\otimes \tilde{\Gamma_{j}}\}_{i \in I,j \in J}$ is a dual  $\ast$-operator frames of $\{\Lambda_{i}\otimes \Gamma_{j}\}_{i \in I,j \in J}$.
		\end{proof}
	
\begin{corollary}\textcolor{white}{.}
		
		Let $(\Lambda_{ij})_{0\leq i \leq n; j\in J}$ be a family of $\ast$-operator and $ (\tilde{\Lambda}_{ij})_{0\leq i \leq n; j\in J}$ its their dual, then $(\tilde{\Lambda}_{0j}\otimes \tilde{\Lambda}_{1j}\otimes ...\otimes\tilde{\Lambda}_{nj})_{j\in J}$ is a dual of  $(\Lambda_{0j}\otimes \Lambda_{1j}\otimes ...\otimes\Lambda_{nj})_{j\in J}$.
\end{corollary}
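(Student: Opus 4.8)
The plan is to proceed by induction on $n$, using the preceding tensor product theorem as the engine. The base case $n=1$ is exactly that theorem: with $\mathcal{H}_0,\mathcal{H}_1$ the two underlying Hilbert $C^{\ast}$-modules and $\{\Lambda_{0j}\}_{j}$, $\{\Lambda_{1j}\}_{j}$ the two $\ast$-operator frames, the assertion that $\{\tilde{\Lambda}_{0j}\otimes\tilde{\Lambda}_{1j}\}_{j}$ is a dual of $\{\Lambda_{0j}\otimes\Lambda_{1j}\}_{j}$ is precisely the conclusion already established.

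For the inductive step I would assume the result for $n$ factors and prove it for $n+1$. The key structural fact is the associativity of the tensor product of Hilbert $C^{\ast}$-modules, $\mathcal{H}_0\otimes\cdots\otimes\mathcal{H}_n\cong(\mathcal{H}_0\otimes\cdots\otimes\mathcal{H}_{n-1})\otimes\mathcal{H}_n$. By the inductive hypothesis, $\{\tilde{\Lambda}_{0j}\otimes\cdots\otimes\tilde{\Lambda}_{(n-1)j}\}_{j}$ is a dual of $\{\Lambda_{0j}\otimes\cdots\otimes\Lambda_{(n-1)j}\}_{j}$ on the composite module $\mathcal{H}_0\otimes\cdots\otimes\mathcal{H}_{n-1}$. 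Treating this composite family as a single $\ast$-operator frame and $\{\Lambda_{nj}\}_{j}$ as the remaining factor, I would invoke the two-factor theorem once more to conclude that the tensor product of the two duals is a dual of the tensor product of the two frames, which is exactly the desired statement for $n+1$ factors.

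More transparently, one may verify the reconstruction identity directly, mirroring the computation in the proof of the two-factor theorem. Applying $\sum_{j}(\Lambda_{0j}\otimes\cdots\otimes\Lambda_{nj})^{\ast}(\tilde{\Lambda}_{0j}\otimes\cdots\otimes\tilde{\Lambda}_{nj})$ to an elementary tensor $x_0\otimes\cdots\otimes x_n$ and using that the adjoint of a tensor product factors as $(\Lambda_{0j}\otimes\cdots\otimes\Lambda_{nj})^{\ast}=\Lambda_{0j}^{\ast}\otimes\cdots\otimes\Lambda_{nj}^{\ast}$, the summand becomes $(\Lambda_{0j}^{\ast}\tilde{\Lambda}_{0j}x_0)\otimes\cdots\otimes(\Lambda_{nj}^{\ast}\tilde{\Lambda}_{nj}x_n)$. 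Separating the sum across the factors reduces each slot to $\sum_{j}\Lambda_{ij}^{\ast}\tilde{\Lambda}_{ij}x_i=x_i$, which holds because $\{\tilde{\Lambda}_{ij}\}_{j}$ is a dual of $\{\Lambda_{ij}\}_{j}$; hence the whole expression equals $x_0\otimes\cdots\otimes x_n$, and the operator identity $\sum_{j}(\cdots)^{\ast}(\cdots)=I$ follows on the span of elementary tensors and extends by linearity and continuity.

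The main obstacle I expect is the careful bookkeeping of the index structure together with the justification that the separation of the sum across tensor slots is legitimate. Concretely, one must ensure that the tensor product construction is associative and that the adjoint and module actions are compatible across all $n+1$ factors, so that repeated application of the two-factor theorem (equivalently, the direct slot-by-slot factoring of the sum) is valid; one should also confirm convergence of the relevant sums in the composite module norm, so that the identity extends from elementary tensors to all of $\mathcal{H}_0\otimes\cdots\otimes\mathcal{H}_n$.
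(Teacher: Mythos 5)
The paper states this corollary without any proof at all, so there is nothing to compare line by line; your strategy of inducting on the number of factors via the two-factor tensor product theorem, together with associativity of the tensor product of Hilbert $C^{\ast}$-modules, is exactly the argument the word ``corollary'' is gesturing at, and both your inductive formulation and your direct slot-by-slot computation are sound \emph{provided} the family is indexed by the product set $J^{n+1}$, i.e.\ by tuples $(j_0,\dots,j_n)$ giving operators $\Lambda_{0j_0}\otimes\cdots\otimes\Lambda_{nj_n}$. That is what Theorem 4.1 actually produces: its dual family is indexed by $I\times J$, not by a single shared index.

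The one genuine issue --- which you correctly identify as the crux (``the justification that the separation of the sum across tensor slots is legitimate'') but do not resolve --- is that the corollary as literally written uses a single index $j\in J$ for all $n+1$ slots, i.e.\ the diagonal family $(\Lambda_{0j}\otimes\cdots\otimes\Lambda_{nj})_{j\in J}$. For that family the sum $\sum_{j\in J}(\Lambda_{0j}^{\ast}\tilde{\Lambda}_{0j}x_0)\otimes\cdots\otimes(\Lambda_{nj}^{\ast}\tilde{\Lambda}_{nj}x_n)$ does \emph{not} factor as a product of the $n+1$ single-slot sums, and the reconstruction identity generally fails: already for $n=1$, $\mathcal{H}_0=\mathcal{H}_1=\mathbb{C}$, $J=\{1,2\}$, with every $\Lambda_{ij}$ and $\tilde{\Lambda}_{ij}$ equal to multiplication by $1/\sqrt{2}$, each slot reconstructs the identity but the diagonal sum gives $\tfrac{1}{2}$. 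So you should either restate the corollary with the multi-index $(j_0,\dots,j_n)\in J^{n+1}$ (in which case your induction goes through verbatim), or note explicitly that the single-index version is false. The remaining technical points you flag --- associativity of the tensor product, the factorization $(\Lambda_{0j_0}\otimes\cdots\otimes\Lambda_{nj_n})^{\ast}=\Lambda_{0j_0}^{\ast}\otimes\cdots\otimes\Lambda_{nj_n}^{\ast}$, and convergence so that the identity extends from elementary tensors to the whole module --- are standard and unproblematic here.
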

		
\subsection*{Acknowledgment}
The authors would like to thank from the anonymous reviewers for carefully reading of the manuscript
and giving useful comments, which will help to improve the paper.


\begin{thebibliography}{99}
	\bibitem{Ch} O. Christensen, An Introduction to Frames and Riesz bases, Brikhouser,2016.
	\bibitem{Con} J.B.Conway, A Course In Operator Theory, AMS, V.21, 2000.
	\bibitem{Dav} F. R. Davidson, $\mathcal{C}^{\ast}$-algebra by example, Fields Ins. Monog. 1996.
	\bibitem{DG} I.Daubechies, A.Grossmann, and Y.Meyer, Painless nonorthogonal expansions, J.Math. Phys.27 (1986), 1271-1283
	\bibitem{Duf} R. J. Duffin, A. C. Schaeffer, A class of nonharmonic Fourier series, Trans. Amer. Math. Soc. 72 (1952),
	341-366.
	\bibitem{Ros3} M. Rossafi and S. Kabbaj, 
	\emph{$\ast$-operator frame for $End_{\mathcal{A}}^{\ast}(\mathcal{H})$}, Arxiv preprint 
	arXiv:1806.03993 (2018).
\end{thebibliography}
\end{document}